\pgfplotsset{compat=1.15}
\newcommand{\setword}[2]{%
	\phantomsection
	#1\def\@currentlabel{\unexpanded{#1}}\label{#2}%
}
\definecolor{uuuuuu}{rgb}{0.26666666666666666,0.26666666666666666,0.26666666666666666}
\definecolor{xdxdff}{rgb}{0.49019607843137253,0.49019607843137253,1.}
\definecolor{ffqqqq}{rgb}{1.,0.,0.}
\definecolor{ffqqqq}{rgb}{1.,0.,0.}
\definecolor{ffxfqq}{rgb}{1.,0.4980392156862745,0.}
\definecolor{uuuuuu}{rgb}{0.26666666666666666,0.26666666666666666,0.26666666666666666}
\definecolor{qqwuqq}{rgb}{0.,0.39215686274509803,0.}
\definecolor{zzttqq}{rgb}{0.6,0.2,0.}
\definecolor{xdxdff}{rgb}{0.49019607843137253,0.49019607843137253,1.}
\definecolor{qqqqff}{rgb}{0.,0.,1.}
\definecolor{cqcqcq}{rgb}{0.7529411764705882,0.7529411764705882,0.7529411764705882}
\definecolor{sqsqsq}{rgb}{0.12549019607843137,0.12549019607843137,0.12549019607843137}
\definecolor{uuuuuu}{rgb}{0.26666666666666666,0.26666666666666666,0.26666666666666666}
\definecolor{ffqqqq}{rgb}{1,0,0}
\definecolor{xdxdff}{rgb}{0.49019607843137253,0.49019607843137253,1}
\definecolor{yqqqyq}{rgb}{0.5019607843137255,0,0.5019607843137255}
\definecolor{qqqqff}{rgb}{0,0,1}
\definecolor{ffqqqq}{rgb}{1,0,0}
\definecolor{ffqqff}{rgb}{1,0,1}
\theoremstyle{plain}
\newtheorem{theorem}[subsection]{Theorem}
\newtheorem{theorem1}[subsubsection]{Theorem}
\newtheorem{lemma}[subsection]{Lemma}
\newtheorem{defi}[subsection]{Definition}
\newtheorem{proposition}[subsection]{Proposition}
\newtheorem{proposition1}[subsubsection]{Proposition}
\theoremstyle{definition}
\newtheorem{exam}[subsection]{Example}
\newtheorem{exam1}[subsubsection]{Example}
\newtheorem{remark}[subsection]{Remark}
\newtheorem{remark1}[subsubsection]{Remark}
\newcommand{\set}[1]{\{#1\}}
\newcommand{\ga}{\alpha}
\renewcommand{\gg}{\gamma}
\newcommand{\gq}{\theta}
\newcommand{\gt}{\tau}
\newcommand{\gG}{\Gamma}
\newcommand{\gQ}{\Theta}
\newcommand{\tbf}{\textbf}
\newcommand{\tit}{\textit}
\newcommand{\D}[1]{\mathbb{#1}}
\newcommand{\te}{\text}
\newcommand{\la}{\langle}
\newcommand{\ra}{\rangle}
\newcommand{\nd}{\noindent}
\begin{document}
 
 \nd To appear,\tit{ Mathematics} 
	\title{Optimal Quantization on Spherical Surfaces: \\
Continuous and Discrete Models -- A Beginner-Friendly Expository Study}
 
 \author{Mrinal Kanti Roychowdhury}
\address{Department of Mathematics and Statistical Science, University of Texas Rio Grande Valley, Edinburg, TX 78539, USA}
\email{mrinal.roychowdhury@utrgv.edu}

	\subjclass[2020]{Primary: 60B05, 94A34, 53C22; Secondary: 62H11.}
	\keywords {Quantization for probability measures, spherical geometry, geodesic distance, optimal n-means, Voronoi partition, continuous and discrete models, great and small circles, quantization error.}
	
	\date{}
	\maketitle
	
	\pagestyle{myheadings}\markboth{Mrinal Kanti Roychowdhury}{Optimal Quantization on Spherical Surfaces}
 \begin{abstract}
 
 This expository paper provides a unified and pedagogical introduction to optimal
quantization for probability measures supported on spherical curves and discrete subsets of the sphere, emphasizing both
continuous and discrete settings. We first present a detailed geometric and analytical
foundation for intrinsic quantization on the unit sphere, including definitions of
great and small circles, spherical triangles, geodesic distance, Slerp interpolation,
the Fr\'echet mean, spherical Voronoi regions, centroid conditions, and quantization
dimensions.

Building upon this framework, we develop explicit continuous and discrete quantization
models on spherical curves, namely great circles, small circles, and great circular arcs—
supported by rigorous derivations and pedagogical exposition. For uniform continuous
distributions, we compute optimal sets of $n$-means and the associated quantization
errors on these curves; for discrete distributions, we analyze antipodal, equatorial,
tetrahedral, and finite uniform configurations, illustrating convergence to the
continuous model.

The central conclusion is that for a uniform probability distribution supported on a
one-dimensional geodesic subset of total length $L$, the optimal $n$-means form a
uniform partition and the quantization error satisfies $V_n = L^2/(12n^2)$. The
exposition emphasizes geometric intuition, detailed derivations, and clear
step-by-step reasoning, making it accessible to beginning graduate students and
researchers entering the study of quantization on manifolds. This article is intended
as an expository and tutorial contribution, with the main emphasis on geometric
reformulation and pedagogical clarity of intrinsic quantization on spherical curves,
rather than on the development of new asymptotic quantization theory.
 \end{abstract}

\setcounter{tocdepth}{2}
\tableofcontents

\section{Introduction and Geometric Preliminaries}

Quantization theory concerns the approximation of a probability distribution by a finite set of
representative points (or codepoints) in such a way that the expected distortion is minimized. The
classical foundations of Euclidean quantization were established through pioneering work of
Zador~\cite{Zador1982}, the extensive development by Gersho and Gray~\cite{GershoGray1992}, and the
authoritative survey of Gray and Neuhoff~\cite{GrayNeuhoff1998}. A rigorous measure-theoretic and
probabilistic framework for quantization of probability distributions was subsequently formulated by Graf
and Luschgy in their monograph~\cite{GrafLuschgy2000}. Statistical aspects such as consistency of the
$k$-means method were studied by Pollard~\cite{Pollard1982}, while learning-theoretic methods in vector
quantization were developed by Linder~\cite{Linder2002}. These works together form the basis of modern
Euclidean quantization theory.

In recent years, there has been growing interest in extending quantization to non-Euclidean and curved
spaces, particularly to probability measures supported on manifolds. On the sphere, quantization is
relevant in directional statistics~\cite{MardiaJupp2000}, geometric data analysis, and manifold-based
applications arising in computer vision, shape analysis, and machine learning. In this setting, classical
Euclidean notions such as straight lines, centroids, and Voronoi regions must be replaced by their intrinsic
geometric counterparts: geodesic arcs, Fréchet means~\cite{Frechet1948}, Karcher means~\cite{Karcher1977},
and spherical Voronoi tessellations.

While the general problem of optimal quantization on spherical surfaces is inherently
high-dimensional and analytically challenging, the focus of this paper is deliberately
more specific and pedagogical. We concentrate on \emph{intrinsic quantization problems
for probability measures supported on one-dimensional spherical curves}, namely great
circles, small circles, and geodesic arcs. In these settings, the sphere induces a
one-dimensional Riemannian structure via arc-length, allowing the intrinsic quantization
problem to reduce to a transparent and fully explicit one-dimensional model.

The central theme of the paper is that, once the correct intrinsic metric is adopted,
quantization on such spherical curves reduces to classical one-dimensional quantization
with respect to arc-length. This reduction explains why the optimal Voronoi cells are
contiguous intervals and why, under uniform density, the optimal codepoints are the
midpoints of these intervals.

A brief familiarity with differential geometry and intrinsic statistics on manifolds is helpful for
understanding quantization beyond Euclidean settings. For an accessible introduction to the geometry of
curves and surfaces, we refer the reader to Tapp~\cite{Tapp}, while the foundational framework of intrinsic
statistics on Riemannian manifolds and geometric measurements was developed in the influential work of
Pennec~\cite{Pennec2006}. These references provide essential background for readers wishing to deepen
their understanding of the geometric and statistical structures underlying quantization on curved spaces.

The aim of this introductory section is twofold. First, we provide a concise and pedagogical overview of
the geometric and analytical tools required for quantization on the sphere—geodesic distance, spherical
coordinates, Slerp interpolation, Fréchet means, Voronoi partitions, and centroid conditions. Second,
we establish the conceptual foundations that allow the reader to follow the developments in the later
sections smoothly, without requiring prior background in differential geometry. The exposition is
intentionally intuitive and example-driven, with the goal of making the subject accessible to beginning
graduate students and researchers in analysis, probability, or applied mathematics who wish to learn
quantization on manifolds for the first time.

Before entering the detailed geometric and analytic developments, we summarize
the guiding principle underlying the main results of this paper. While optimal
quantization on spherical surfaces is, in general, a high-dimensional and
analytically challenging problem, the focus of this paper is deliberately more
specific. We concentrate on intrinsic quantization problems for probability measures
supported on one-dimensional spherical curves, namely great circles, small circles,
and geodesic arcs. Although the ambient space is the two-dimensional sphere, once
the support of the probability measure is constrained to such a spherical curve,
the intrinsic quantization problem effectively reduces to a one-dimensional
Riemannian model with arc-length parameterization. This reduction principle allows
for a transparent and explicit analysis.

\medskip
\noindent\textbf{Nature and contribution of the paper.}
The results presented here recover, in the intrinsic spherical setting, the classical
one-dimensional uniform quantization formula
\[
V_n = \frac{L^2}{12n^2},
\]
where $L$ denotes the intrinsic length of the support curve. Accordingly, the primary
contribution of this paper is not the discovery of new quantization asymptotics, but
rather a geometric re-framing of known one-dimensional results within the intrinsic
geometry of the sphere. The paper is intended as a tutorial and pedagogical note,
highlighting how intrinsic metrics, geodesic Voronoi structures, and arc-length
parameterization lead naturally to transparent and explicit solutions on spherical
curves.

\medskip
\noindent
The reduction principle described above is summarized in the following conceptual
theorem.

\begin{theorem}[Conceptual reduction to the one-dimensional uniform case]
Let $C \subset \D S^2_\rho$ be a smooth closed geodesic curve of total intrinsic length $L$
(e.g.\ a great circle or a small circle), and let $P$ be the uniform probability measure
on $C$ with respect to arc-length. Then the intrinsic quantization problem on $(C,d_G)$
with squared geodesic distortion is equivalent to the classical one-dimensional uniform
quantization problem on a circle of length $L$.

In particular, for each $n \ge 1$, the optimal set of $n$-means consists of $n$ equally
spaced points along $C$, each Voronoi region has length $L/n$, each codepoint is the
geodesic midpoint of its cell, and the quantization error satisfies
\[
V_n = \frac{L^2}{12n^2}.
\]
\end{theorem}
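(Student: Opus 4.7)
The plan is to exploit the fact that a smooth closed curve $C$ of total length $L$, equipped with its intrinsic arc-length metric, is isometric to the standard circle $\mathbb{R}/L\mathbb{Z}$ via an arc-length parametrization $\gamma:\mathbb{R}/L\mathbb{Z}\to C$. First I would verify that such a parametrization exists and is well-defined for the curves in question (great circles and small circles on $\mathbb{S}^2_\rho$); for a great circle this is automatic since the curve is itself a geodesic of the ambient sphere, whereas for a small circle one must be explicit that $d_G$ denotes the \emph{induced} one-dimensional Riemannian distance on $C$ (i.e.\ the shorter arc-length along $C$), not the ambient spherical distance, which for a small circle is strictly smaller. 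Under this identification, $d_G(\gamma(s),\gamma(t)) = \min(|s-t|, L-|s-t|)$.

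Next, I would push the uniform probability measure on $C$ back to $\mathbb{R}/L\mathbb{Z}$, where it becomes $L^{-1}$ times Lebesgue measure. This maps the intrinsic distortion functional
\[
\sum_{i=1}^{n}\int_{V_i} d_G(x,a_i)^2 \, dP(x)
\]
term-by-term onto the classical squared-distance distortion for the uniform probability measure on a circle of length $L$. The intrinsic quantization problem on $(C,d_G,P)$ is therefore isometrically equivalent to the standard one-dimensional uniform quantization problem, so every existence, uniqueness, and optimality statement transfers verbatim.

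Once in the one-dimensional setting, I would invoke three well-known facts: (i) optimal Voronoi regions for a one-dimensional uniform distribution are arcs; (ii) the centroid condition on a uniform arc forces each codepoint to be the geodesic midpoint of its cell; and (iii) an equal-length argument---either a symmetry/convexity argument or a direct Lagrange-multiplier calculation on the cell lengths---forces all $n$ cells to have common length $L/n$. Combining (i)--(iii) gives $n$ equally spaced codepoints as the unique (up to rotation) optimal configuration. The quantization error is then the elementary integral
\[
V_n = n\int_{-L/(2n)}^{L/(2n)} x^2 \cdot \frac{1}{L}\,dx = \frac{L^2}{12n^2}.
\]

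The main obstacle is conceptual rather than technical: one must justify the identification of $(C,d_G)$ with $\mathbb{R}/L\mathbb{Z}$ as an \emph{isometry of metric measure spaces}, carefully distinguishing the induced arc-length metric on $C$ from the ambient spherical distance. Once this isometric reduction is in place, the remainder is a direct application of the classical one-dimensional uniform quantization result, consistent with the paper's stated tutorial aim of re-framing known one-dimensional formulas within the intrinsic geometry of the sphere.
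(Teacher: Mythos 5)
Your proposal is correct and follows essentially the same route as the paper: the paper establishes the result by proving, across Theorems~\ref{thm:equator-structure}, \ref{thm:equator-Vn}, \ref{thm:small-circle}, and Proposition~\ref{prop:unifying}, that under arc-length parameterization the quantization problem on $C$ reduces to the classical one-dimensional uniform problem, with the isometric identification made explicit in the remark following Theorem~\ref{thm:equator-structure}. Your structure (arc-length isometry with $\mathbb{R}/L\mathbb{Z}$, push-forward of the uniform measure, then the classical facts of contiguous cells, midpoints, and equal lengths, followed by the one-cell integral) is the same reduction, merely carried out in a single cleaner pass rather than distributed across several statements.

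One point you raise deserves emphasis, because it actually identifies a subtlety the paper itself passes over. The theorem statement labels $C$ a \emph{geodesic} curve and offers a small circle as an example, yet a small circle at latitude $\lambda\neq0$ is \emph{not} a geodesic of $\D S^2_\rho$: the ambient geodesic distance between two points on a small circle is strictly smaller than the arc-length along the small circle. If $d_G$ in the theorem were taken literally as the paper's globally defined spherical distance, then for a small circle the distortion integrand would involve the great-circle chord across the cap rather than the arc-length, and the exact formula $V_n=L_\lambda^2/(12n^2)$ would hold only asymptotically as $n\to\infty$, not for all $n$. The intended reading, which your proof makes explicit and which the paper uses implicitly when it writes ``with respect to geodesic arc-length'' in Theorem~\ref{thm:small-circle}, is that $d_G$ on $C$ means the induced one-dimensional Riemannian (arc-length) distance along $C$. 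With that convention your isometric reduction is exactly correct, and your insistence on distinguishing the induced metric from the ambient metric is a genuine improvement in precision over the paper's exposition.
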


The present article is intended as a beginner-friendly expository companion to the
author’s forthcoming comprehensive monograph, which will provide a deeper and more
systematic treatment of quantization on spherical surfaces~\cite{Roychowdhury2025}.
Accordingly, we emphasize geometric intuition, explicit derivations, and pedagogical
clarity throughout. This section provides the essential background for the study of
quantization on spherical surfaces, summarizing the geometric and analytical components
that will be used throughout the paper.

\subsection{Great Circle, Small Circle, and Spherical Triangle}
A \emph{great circle} on a sphere is the intersection of the sphere with a plane that passes through the \emph{center} of the sphere. 
Equivalently, it is a circle on the sphere whose center coincides with the center of the sphere. 
A \emph{small circle} on a sphere is the intersection of the sphere with a plane that does \emph{not} pass through the center of the sphere. 
The center of the small circle lies on the line connecting the center of the sphere and the point on the plane nearest to the center, 
but it does not coincide with the sphere’s center. 
  A \emph{spherical triangle} on the surface of a sphere is the region bounded by three arcs of great circles, 
each pair of which intersects at a vertex. 
The three vertices lie on the sphere, and the sides are segments of great circles connecting these vertices. 
The \emph{angles} of a spherical triangle are the dihedral angles between the planes of the great circles at their intersections.

 \subsection{Equator and the Prime Meridian, Latitude and Longitude.} 
The \emph{Equator} is an imaginary closed curve on the surface of the Earth that lies equidistant from the 
North and South Poles. Geometrically, it is a \emph{great circle} and it divides the Earth into the \emph{Northern Hemisphere} and the 
\emph{Southern Hemisphere}. Latitude measures how far north or south a point is from the Equator. It ranges from $0^{\circ}$ at the Equator to $90^{\circ}$ North (the North Pole) and $90^{\circ}$ South (the South Pole). 
 The \emph{Prime Meridian} is an imaginary semicircular great circle on the surface of the Earth that passes 
through the \emph{North Pole} and \emph{South Pole} and the \emph{Royal Observatory in Greenwich, England}. It divides the Earth into the Eastern Hemisphere and the Western Hemisphere. Longitude measures how far east or west a point is from the Prime Meridian. It ranges from $0^{\circ}$ at the Prime Meridian  to $180^{\circ}$ East and $180^{\circ}$ West. The Equator is the $0^{\circ}$ line of latitude, and the Prime Meridian is the $0^{\circ}$ line of Longitude. 
Latitude and Longitude form a coordinate pair: $(Latitude,Longitude)$. For example: New York City $\approx (40.71^{\circ} N, 74.00^{\circ} W)$, 
Rio de Janeiro $\approx(22.91^{\circ} S, 43.17^{\circ} W)$, London $\approx(51.51^{\circ} N, 0.13^{\circ} W)$.

 \subsection{Relationship between three different coordinates: Cartesian, Spherical, and Geographical} 
The relationship between Cartesian coordinates \((x, y, z)\) and spherical coordinates \((\rho, \theta, \phi)\) is given by the conversion formulas:
\begin{equation} \label{Pav1} 
x = \rho \sin \phi \cos \theta, \qquad
y = \rho \sin \phi \sin \theta, \qquad
z = \rho \cos \phi.
\end{equation} 
where 
\[
\begin{aligned}
\rho \ & : \ \text{the radial distance from the origin to the point,} \\
\theta \ & : \ \text{the azimuthal angle, measured in the } xy\text{-plane from the positive } x\text{-axis (longitude),} \\
\phi \ & : \ \text{the polar angle, measured from the positive } z\text{-axis (colatitude).}
\end{aligned}
\]
If the radius $\rho$ is fixed, the spherical coordinates of the point can be identified as $(\gq, \phi)$. Notice that if the radius $\rho$ of the sphere is fixed the same point in latitude-longitude coordinates is represented by $(\frac \pi 2-\phi, \gq)$. The latitude-longitude coordinates of a point on a sphere are called the \tit{geographical coordinates} of the point.  
Let the geographical coordinates of a point on the sphere be given by $(\phi, \gq)$, where 
\[
\phi \in \left[-\frac{\pi}{2}, \frac{\pi}{2}\right] \quad \text{(latitude, north positive)}, \te{ and }
\gq \in (-\pi, \pi] \quad \text{(longitude, east positive)}.
\]
Then, by \eqref{Pav1} we have the embedding of $(\phi, \gq)$ into $\mathbb{R}^3$ as the vector
\begin{equation} \label{Pav2}
\tbf x(\phi, \gq) = (x, y, z) = 
\bigl( \rho \cos \phi \cos \gq,\, \rho \cos \phi \sin \gq,\, \rho \sin \phi \bigr).
\end{equation}

\subsection{Geodesic Distance via Geographical Coordinates}
The geodesic distance between two points on a surface (like a sphere) is the shortest possible distance along the surface that connects them.
It is the analog of a ``straight line distance" in flat Euclidean space — but restricted to move on the surface.
On a sphere, the geodesics are great circle arcs, so the geodesic distance between two points on the sphere equals the length of the shorter great circle arc joining them.
Consider two points on a sphere of radius $\rho$ with geographical coordinates
\[
P_1 = (\phi_1, \gq_1), \qquad P_2 = (\phi_2, \gq_2).
\]
For two points $P_1$ and $P_2$, their corresponding vectors are
\[
\mathbf{x}_1 = \mathbf{x}(\phi_1,\gq_1),
\qquad
\mathbf{x}_2 = \mathbf{x}(\phi_2,\gq_2).
\]
 The dot product is
\begin{align*}
\mathbf{x}_1 \cdot \mathbf{x}_2
&= \rho^2\left[(\cos\phi_1\cos\gq_1)(\cos\phi_2\cos\gq_2)
+ (\cos\phi_1\sin\gq_1)(\cos\phi_2\sin\gq_2)
+ (\sin\phi_1)(\sin\phi_2)\right]\\[0.5em]
&=\rho^2\left[ \cos\phi_1\cos\phi_2\bigl(\cos\gq_1\cos\gq_2 + \sin\gq_1\sin\gq_2 \bigr)
+ \sin\phi_1\sin\phi_2\right]\\[0.5em]
&= \rho^2\left[\cos\phi_1\cos\phi_2\cos(\gq_1 - \gq_2) + \sin\phi_1\sin\phi_2\right].
\end{align*}
Therefore, the \emph{central angle} $\gQ$ between $P_1$ and $P_2$ satisfies
\[
 \gQ = \cos^{-1}\Big( \frac{x_1\cdot x_2}{\rho^2}\Big)=\cos^{-1}\Big (\sin\phi_1\sin\phi_2 + 
\cos\phi_1\cos\phi_2\cos(\gq_1 - \gq_2)\Big).
\]
Then,  
\begin{equation*}  
d_G(P_1, P_2) = \rho  \gQ=\rho\cos^{-1}\Big (\sin\phi_1\sin\phi_2 + 
\cos\phi_1\cos\phi_2\cos(\gq_1 - \gq_2)\Big),
\end{equation*} 
which is known as the \tit{geodesic distance between $P_1$ and $P_2$} via geographical coordinates on a sphere of radius~$\rho$.

\subsection{Arc Length on a Spherical Surface}
 Let 
\[
\D S^2_\rho = \{ (x,y,z) \in \mathbb{\D R}^3 : x^2 + y^2 + z^2 = \rho^2 \}
\]
be a sphere of radius \( \rho > 0 \).  
Let \( \Gamma \subset \D S^2_\rho \) be a smooth curve lying on the spherical surface.
Suppose that \( \Gamma \) admits a smooth parametrization
\[
\boldsymbol{\gamma} : [a,b] \to \D S^2_\rho, \qquad 
\boldsymbol{\gamma}(t) = (x(t), y(t), z(t)),
\]
such that \( \|\boldsymbol{\gamma}(t)\| = \rho \) and \( \boldsymbol{\gamma}'(t) \neq 0 \) for all \( t \in [a,b] \).
The \emph{arclength element} $ds$ along the curve is
\[
ds = \|\boldsymbol{\gamma}'(t)\| \, dt.
\]
$ds$ is also known as the differential of the arclength. 
The total length of the curve is given by
\[
L(\Gamma) = \int_a^b  ds=\int_a^b \|\boldsymbol{\gamma}'(t)\| \, dt.
\]

\subsection{The two-argument arctangent function
\texorpdfstring{$\operatorname{atan2}(y,x)$}{atan2}}

To represent angles in the full range $(-\pi,\pi]$ without ambiguity, we use the
two-argument arctangent function $\theta=\operatorname{atan2}(y,x)$. For any
$(x,y)\neq(0,0)$, this function returns the unique angle $\theta\in(-\pi,\pi]$ such
that
\[
\cos\theta=\frac{x}{\sqrt{x^2+y^2}},\qquad
\sin\theta=\frac{y}{\sqrt{x^2+y^2}}.
\]
Unlike the single-variable function $\arctan(y/x)$, the function
$\operatorname{atan2}(y,x)$ automatically accounts for the correct quadrant of the
point $(x,y)$ and is standard in geometry, navigation, and numerical implementations.

In this paper, $\operatorname{atan2}$ is used only to define longitude coordinates
consistently and plays no further conceptual role in the quantization analysis.

\subsection{Spherical Linear Interpolation (Slerp)}
It is a smooth parametrization of the shortest geodesic (great-circle) arc connecting two points on a sphere. 

Let $\D S^2_\rho = \{\, x \in \mathbb{R}^3 : \|x\| = \rho \,\}$ be a sphere of radius $\rho > 0$, 
and let $u_A, u_B \in\D S^2_\rho$ be two distinct points. 
Denote by 
\[
s = \arccos\!\left( \frac{\langle u_A, u_B \rangle}{\rho^2} \right)
\]
the central angle (in radians) subtended by $u_A$ and $u_B$ at the center of the sphere, where $\langle u_A, u_B \rangle$ is the Euclidean inner product (dot product) between the 3D vectors $u_A$ and $u_B$. 
Then, the \tit{Slerp curve} between $u_A$ and $u_B$ is defined by
\[
\gamma_{AB}(\tau)
= \frac{\sin((1-\tau)s)}{\sin s}\,u_A
+ \frac{\sin(\tau s)}{\sin s}\,u_B,
\qquad \tau \in [0,1].
\]
Obviously, the curve satisfies $\gamma_{AB}(0)=u_A$, $\gamma_{AB}(1)=u_B$. Moreover, $\|\gamma_{AB}(\tau)\|=\rho$ and  $\|\gamma_{AB}'(\tau)\|=\rho s$, which is a constant (see Proposition~\ref{prop00})  for all $\tau \in [0,1]$. Thus, the curve lies entirely on the sphere with length the geodesic distance $d_G(u_A, u_B)=\rho s$ between $u_A$ and $u_B$. $\gg_{AB}(\gt)$ traces the unique great-circle arc connecting $u_A$ and $u_B$. As $\gt$ increases uniformly from $0$ to $1$, the central angle from $u_A$ to $\gamma_{AB}(\tau)$ increases linearly from $0$ to $s$; hence, the motion along the arc has constant angular speed and covers equal arc lengths for equal increments of $\gt$.

\begin{proposition1} \label{prop00} 
For the Slerp curve
\[
\gamma_{AB}(\tau)
= \frac{\sin((1-\tau)s)}{\sin s}\,u_A
+ \frac{\sin(\tau s)}{\sin s}\,u_B,
\qquad \tau \in [0,1],
\]
where $\|u_A\|=\|u_B\|=\rho$ and $\langle u_A,u_B\rangle=\rho^2\cos s$,
we have
\[
\|\gamma_{AB}(\tau)\| = \rho, \te{ and } \|\gg_{AB}'(\gt)\|=\rho s.
\]
\end{proposition1} 

\begin{proof}
We have 
\[
\|\gamma_{AB}(\tau)\|^2 = \langle \gamma_{AB}(\tau),\, \gamma_{AB}(\tau) \rangle=\langle a\,u_A + b\,u_B, a\,u_A + b\,u_B\rangle,
\]
where
\[
a = \frac{\sin((1-\tau)s)}{\sin s}  \te{ and } 
b = \frac{\sin(\tau s)}{\sin s}.
\]
Then
\[
\|\gamma_{AB}(\tau)\|^2 
= a^2 \|u_A\|^2 
+ b^2 \|u_B\|^2 
+ 2ab\,\langle u_A, u_B \rangle.
\]
Because $u_A, u_B\in S_\rho^2$, we have 
\[\|u_A\|=\|u_B\|=\rho \te{ and } \la u_A, u_B\ra=\rho^2\cos s\]
implying \[
\|\gamma_{AB}(\tau)\|^2 
= \rho^2(a^2 +b^2+ 2ab\cos s).
\]
Substituting the values of $a$ and $b$, we have 
\[
a^2 + b^2 + 2ab\cos s 
= 
\frac{
\sin^2((1-\tau)s)
+ \sin^2(\tau s)
+ 2\sin((1-\tau)s)\sin(\tau s)\cos s
}{\sin^2 s}.
\]
Using the product-to-sum identity
\[
\sin X \sin Y 
= \tfrac{1}{2}\big(\cos(X-Y) - \cos(X+Y)\big),
\]
and after simplification, we obtain
\[
\sin^2((1-\tau)s)
+ \sin^2(\tau s)
+ 2\sin((1-\tau)s)\sin(\tau s)\cos s
= \sin^2 s.
\]
Thus,
\[
a^2 + b^2 + 2ab\cos s
= \frac{\sin^2 s}{\sin^2 s}
= 1 \te{ yielding } \|\gamma_{AB}(\tau)\|^2
= \rho^2, \te{ i.e., }    \|\gamma_{AB}(\tau)\|=\rho. 
\]
To show $\|\gg_{AB}'\|=\rho s$, we proceed as follows:

Let \[u_A=\tbf x(\phi_1, \gq_1), \te{ and } u_B=\tbf x(\phi_2, \gq_2).\]
 Write the associated unit vectors: 
\[\hat u_A=\frac {u_A} \rho, \te{ and } \hat u_B=\frac {u_B}\rho.\]
Their central angle $s \in [0, \pi]$ is independent of $\rho$, i.e., $\hat u_A \cdot \hat u_B = \cos s, \text{ where } s \in [0,\pi]$. Build an orthonormal basis of the plane span$\set{\hat u_A, \hat u_B}$: 
\begin{equation} \label{eqPav21} e_1:=\hat u_A, \te{ and } e_2:=\frac{\hat u_B-(\hat u_A\cdot \hat u_B) \hat u_A}{\|\hat u_B-(\hat u_A\cdot \hat u_B) \hat u_A\|}.\end{equation} 
Write
\[
w := \hat u_B - (\hat u_A \cdot \hat u_B)\hat u_A = \hat u_B - \cos s\,\hat u_A.
\]
This is the component of \(\hat u_B\) orthogonal to \(\hat u_A\).
We compute its norm:
\[
\begin{aligned}
\|w\|^2 
&= \|\hat u_B - \cos s\,\hat u_A\|^2 \\[4pt]
&= \|\hat u_B\|^2 - 2\cos s\,(\hat u_A \cdot \hat u_B) + \cos^2 s\,\|\hat u_A\|^2 \\[4pt]
&= 1 - 2\cos^2s + \cos^2s \qquad  (\text{since }\|\hat u_A\|=\|\hat u_B\|=1) \\[4pt]
&= 1 - \cos^2s \\[4pt]
&= \sin^2 s.
\end{aligned}
\]
Therefore, 
\[
\|\,\hat u_B - (\hat u_A \cdot \hat u_B)\hat u_A\,\| = \sin s.
\]
Hence, by \eqref{eqPav21}, we have 
\[
e_2 = \frac{\hat u_B - (\hat u_A \cdot \hat u_B)\hat u_A}{\|\hat u_B - (\hat u_A \cdot \hat u_B)\hat u_A\|}
     = \frac{\hat u_B - \cos s\,\hat u_A}{\sin s} \te{ implying } \hat u_B=\cos s \, e_1+\sin s \,e_2.
\]
Hence, for $\gt \in [0, 1]$, we have 
\begin{align*}
\gamma_{AB}(\tau)
&= \frac{\sin((1-\tau)s)}{\sin s}\,\rho \hat u_A
+ \frac{\sin(\tau s)}{\sin s}\,\rho \hat u_B =\rho\Big(\frac{\sin((1-\tau)s)}{\sin s}\, e_1
+ \frac{\sin(\tau s)}{\sin s}(\cos s \, e_1+\sin s \,e_2)\Big) \\
&=\rho\Big(\cos(\gt s) e_1+ \sin (\gt s) e_2\Big).
\end{align*}
Differentiating with respect to $\gt$, we have 
\[\gamma'_{AB}(\tau)=\rho s\Big(-\sin(\gt s) e_1+ \cos (\gt s) e_2\Big).\]
Since $e_1$ and $e_2$ are orthonormal, we have $\|\gg_{AB}'(\gt)\|=\rho s$. Thus, the proof is complete. 
 \end{proof}
\begin{remark1} In Proposition~\ref{prop00}, $s$ is the angle between the vectors $u_A, u_B\in \D S^2$. If $s=0$, then $u_A=u_B$, and hence we can take the Slerp curve as a constant curve $\gg_{AB}(\gt)=\rho u_A$ for $0\leq \gt \leq 1$. Then, $\gg_{AB}'(\gt)=0$ for all $0\leq \gt\leq 1$. Hence, the length of the Slerp curve is 
\[L=\int_0^1 \|\gg'_{AB}(\gt)\|\,d\gt=0=\rho  0=\rho s.\]
Likewise, the speed is $\|\gg'_{AB}(\gt)\|=0=\rho s$. On the other hand, if $s = \pi$, then the vectors $u_A$ and $u_B$ are antiparallel, i.e., $u_B = -u_A$. The short geodesic is any great semicircle joining $u_A$ to $-u_A$;
it is not unique because there are infinitely many planes through the origin containing
the line $\mathbb{R}u_A=\set{tu_A : t\in \D R}$.
Choose any unit vector $e_2 \perp u_A$. Then the great semicircle can be parameterized as
\[
\gg_{AB}(\tau) = \rho\big(\cos(\pi\tau)\,u_A + \sin(\pi\tau)\,e_2\big),
\qquad 0 \le \tau \le 1.
\]
This curve starts at $A = \rho u_A$ and ends at $B = \rho u_B = -\rho u_A$.
Differentiating with respect to $\tau$, we obtain
\[
\gg_{AB}'(\tau)
= \rho\pi\big(-\sin(\pi\tau)\,u_A + \cos(\pi\tau)\,e_2\big).
\]
Because $u_A$ and $e_2$ are orthonormal, the speed is constant:
\[
\|\gg_{AB}'(\tau)\|
= \rho\pi
= \rho s.
\]
Hence the total length of the semicircular geodesic is
\[
L = \int_0^1 \|\gg_{AB}'(\tau)\|\,d\tau
  = \int_0^1 \rho\pi\,d\tau
  = \rho\pi
  = \rho s.
\]
\end{remark1}

\subsection{Fr\'echet Mean}

Let $(M,d)$ be a metric space and let $P$ be a Borel probability measure on $M$.  
The \emph{Fr\'echet mean} (also called the \emph{intrinsic mean} or \emph{Riemannian center of mass}) of $P$ is defined as the point or set of points in $M$ minimizing the expected squared distance to $P$. Formally,
\begin{equation}\label{eq:frechet}
\mu^{*} = \arg\min_{q\in M} \int_{M} d^{2}(x,q)\,dP(x).
\end{equation}
When the minimizer is unique, $\mu^{*}$ is called \emph{the} Fr\'echet mean of $P$; otherwise, the set of all minimizers is referred to as the \emph{Fr\'echet mean set} of $P$.  
The function 
\[
F(q) := \int_{M} d^{2}(x,q)\,dP(x)
\]
is known as the \emph{Fr\'echet functional}. A point $\mu^{*}$ satisfying \eqref{eq:frechet} is the unique global minimizer of $F$ whenever $F$ is strictly convex.

\vspace{1em}
\noindent\textbf{Interpretation.}
The Fr\'echet mean generalizes the classical Euclidean mean to arbitrary metric or Riemannian spaces.  
In Euclidean space, the minimizer of the mean squared distance coincides with the ordinary arithmetic mean.  
On a curved manifold, the distance $d$ is replaced by the geodesic distance $d_{G}$, so that the Fr\'echet mean provides the natural notion of ``average'' consistent with the intrinsic geometry of $M$.

\begin{exam1}[Euclidean space]
Let $M=\mathbb{R}^{n}$ with the standard Euclidean distance $d(x,q)=\|x-q\|$.  
For a random variable $X$ with distribution $P$, the Fr\'echet functional becomes
\[
F(q) = \int_{\mathbb{R}^{n}} \|x-q\|^{2}\,dP(x).
\]
Differentiating with respect to $q$ and setting $\nabla F(q)=0$ yields
\[
q = \int_{\mathbb{R}^{n}} x\,dP(x) = \mathbb{E}[X].
\]
Hence the Fr\'echet mean coincides with the classical Euclidean (arithmetic) mean.
\end{exam1}

\begin{exam1}[Spherical space]
Let $M=\D S^2$ be the unit sphere in $\mathbb{R}^{3}$ equipped with the geodesic distance
\[
d_{G}(x,q)=\arccos(\langle x,q\rangle),
\]
where $\langle \cdot,\cdot\rangle$ denotes the standard inner product in $\mathbb{R}^{3}$.  
For a probability distribution $P$ on $\D S^2$, the Fr\'echet mean minimizes
\[
F(q)=\int_{\D S^2} d_{G}^{2}(x,q)\,dP(x)
     =\int_{\D S^2} \arccos^{2}(\langle x,q\rangle)\,dP(x).
\]
If $P$ is the uniform distribution on a geodesic arc of the sphere, the unique minimizer $\mu^{*}$ is the midpoint of the arc in geodesic distance.  
Similarly, for a uniform distribution on a symmetric closed curve such as the boundary of a spherical triangle, the Fr\'echet mean lies on the axis of symmetry of that curve.
\end{exam1}

\subsection{Quantization Error on the Sphere}
Let $P$ be a Borel probability measure on a sphere $\D S_\rho^2$ of radius $\rho$ equipped with the geodesic metric $d_G$. 
Let \( \alpha = \{ a_1, a_2, \dots, a_n \} \subset \D S_\rho^2 \) be a finite set of points.  Such a set $\ga$ is also referred to as codebook and the elements as codepoints. Let $r \in\D R$ with $r>0$. 
Then, the \emph{distortion error} for $P$, of order \( r > 0 \) associated with \( \alpha \), denoted by $V_r(P; \ga)$, is defined by
\[
V_{r}(P; \alpha): = \int_{\D S_\rho^2} \min_{a_i \in \alpha} d^r_G(x, a_i) \, dP(x). 
\]
 Write
\[
V_{n, r}(P):= \inf_{\substack{\alpha \subset \D S_\rho^2 \\ |\alpha| \le n}} V_{r}(P; \alpha).
\]
$V_{n, r}(P)$ is called the \tit{$n$th quantization error of order $r$} for the probability measure $P$. 
When \( r = 2 \), the problem corresponds to minimizing the mean squared geodesic distance between a random vector $X$ with distribution $P$ and its nearest codepoint. Then, $V_{n, r}(P)$ is  called the $n$th quantization error for $P$ with respect to the squared geodesic distance.  

\subsection{Spherical Voronoi Regions}

Given a codebook \( \alpha = \{ a_1, \dots, a_n \} \subset \D S_\rho^2 \), the sphere can be partitioned into \emph{spherical Voronoi regions}
\[
R(a_i|\ga)= \{ x \in \D S_\rho^2 : d_G(x, a_i) \le d_G(x, a_j), \ \te{ for all } 1\leq j \leq n \te{ and } j \neq i  \}.
\]
Each region \( R(a_i|\ga)\) consists of all points on the sphere closer to \( a_i \) than to any other codepoint (with respect to the geodesic metric).
The distortion error $V_{r}(P; \ga)$ can then be written as
\[
V_{r}(P; \alpha) = \sum_{i=1}^{n} \int_{R(a_i|\ga)} d^r_G(x, a_i) \, dP(x).
\]
 
\subsection{Optimal Quantizers on the Sphere}

A codebook \( \alpha^* = \{ a_1^*, \dots, a_n^* \} \subset \D S_\rho^2 \) is called an \tit{optimal codebook}, also called an \tit{optimal set of $n$-means}, if
\[
V_{n, r}(P) = V_{r}(P; \alpha^*).
\]
Each element of an optimal codebook is called an \tit{optimal codepoint} or \tit{optimal quantizer}. 
In the case \( r = 2 \), each optimal codepoint satisfies a \emph{spherical centroid condition} analogous to the Euclidean case.

\subsection{Extrinsic centroid condition and its relation to intrinsic (Fr\'echet) means}
\label{subsec:centroid-extrinsic-intrinsic}

Let $P$ be a Borel probability measure supported on the sphere
\[
\D S^2_\rho := \{x \in \mathbb{R}^3 : \|x\| = \rho\},
\]
equipped with the geodesic distance $d_G$. Let
\[
\alpha^*=\{a_1^*,a_2^*,\dots,a_n^*\}\subset\D S^2_\rho
\]
be an optimal set of $n$-means for $P$ (with respect to squared geodesic distortion), and let
$R(a_i^*\mid \alpha^*)$ denote the associated spherical Voronoi regions.

\medskip
\noindent\textbf{Extrinsic centroid condition.}
In many applied and algorithmic treatments of quantization on spheres (notably spherical $k$-means),
one considers an \emph{extrinsic} formulation obtained by viewing $\D S^2_\rho$ as embedded in
$\mathbb{R}^3$ and minimizing Euclidean squared distance subject to the spherical constraint.
More precisely, for each Voronoi region $R(a_i^*\mid \alpha^*)$ one minimizes
\[
\int_{R(a_i^*\mid \alpha^*)}\|x-a\|^2\,dP(x)
\quad \text{subject to } \|a\|=\rho.
\]
Define the (unnormalized) Euclidean conditional expectation over the Voronoi region by
\[
m_i := \int_{R(a_i^*\mid \alpha^*)} x\,dP(x).
\]
Since typically $\|m_i\|<\rho$, the constrained minimizer is obtained by radial projection of $m_i$
onto $\D S^2_\rho$, yielding the \emph{extrinsic centroid}
\begin{equation}   \label{eq5}
a_i^* \;=\; \rho\,\frac{m_i}{\|m_i\|}
\;=\;
\rho\,
\frac{\displaystyle \int_{R(a_i^*\mid \alpha^*)} x\,dP(x)}
{\displaystyle \left\|\int_{R(a_i^*\mid \alpha^*)} x\,dP(x)\right\|}.
\end{equation}
We emphasize that \eqref{eq5} is an \emph{extrinsic} (embedded/chordal) centroid rule: it arises from
Euclidean squared loss in $\mathbb{R}^3$ together with the constraint $\|a\|=\rho$.

\medskip
\noindent\textbf{Relation to intrinsic (Fr\'echet) means.}
The intrinsic (Fr\'echet) mean on $\D S^2_\rho$ is defined as a minimizer of the geodesic Fr\'echet functional
\[
q \;\mapsto\; \int_{\D S^2_\rho} d_G(x,q)^2\,dP(x).
\]
In general, the extrinsic centroid \eqref{eq5} does \emph{not} coincide with the intrinsic Fr\'echet mean,
since the intrinsic problem depends nonlinearly on the Riemannian geometry through the exponential and
logarithm maps.

For a Voronoi region $R\subset \D S^2_\rho$, the correct intrinsic first-order optimality condition for a
minimizer $a\in\D S^2_\rho$ of
\[
F_R(a):=\int_R d_G(x,a)^2\,dP(x)
\]
is the vanishing of the Riemannian gradient, which can be expressed (when $R$ lies in a domain where the
logarithm map is well-defined) as 
\begin{equation} \label{eq:intrinsic-stationarity} 
\int_R \log_a(x)\,dP(x)=0,
\end{equation}
where $\log_a(\cdot)$ denotes the Riemannian logarithm map at $a$ on the sphere.
In general, \eqref{eq:intrinsic-stationarity} is an implicit condition and does not yield a closed-form
centroid formula.

\medskip
\noindent\textbf{Coincidence in the one-dimensional uniform models studied here.}
Although intrinsic and extrinsic centroids differ in general, they coincide in several important
situations, including:
\begin{itemize}
\item when the support of the conditional distribution $P(\cdot\mid R(a_i^*\mid \alpha^*))$ is contained
in a sufficiently small geodesic ball (so the intrinsic Fr\'echet functional is strictly convex), and
\item when the conditional distribution is symmetric with respect to a geodesic midpoint.
\end{itemize}
In particular, for the one-dimensional uniform models treated in Sections~4--7, each Voronoi region is a
geodesic interval (arc), and the intrinsic Fr\'echet mean reduces to the geodesic midpoint of that
interval. Consequently, in these settings the representative points produced by the extrinsic centroid
rule \eqref{eq5} agree with the intrinsic minimizers for squared geodesic distortion.

\medskip
\noindent\textbf{Remark.}
A fully intrinsic ``centroid condition'' for optimal quantizers on $\D S^2_\rho$ can be developed by
analyzing the Riemannian gradient of the Fr\'echet functional and using the logarithm map, leading to
\eqref{eq:intrinsic-stationarity}. Since the focus of this paper is on explicit and pedagogical models
where intrinsic means are available in closed form (and coincide with midpoints on geodesic intervals),
we restrict attention to the clarification above.

\subsection{Quantization Dimension and Quantization Coefficient}
Let $P$ be a Borel probability measure on a sphere $\D S_\rho^2$ of radius $\rho$, equipped with the geodesic metric $d_G$. 
Let $V_{n,r}(P)$ denote the $n$th quantization error of order $r>0$ for $P$. 
If the following limit exists,
\[
D_r(P)
    = \lim_{n \to \infty}
      \frac{r \log n}{-\log V_{n,r}(P)},
\]
then $D_r(P)$ is called the \emph{quantization dimension of order $r$} of the measure $P$. 
It measures the asymptotic rate at which the optimal quantization error $V_{n,r}(P)$ decreases as $n$ increases.
In particular, if
\[
V_{n,r}(P) \asymp C n^{-r/s}
\quad \text{for some constant } C>0,
\]
then $D_r(P) = s$.
Assuming that $D_r(P) = s$ exists, the \emph{upper} and \emph{lower quantization coefficients of order $r$} are defined respectively by
\[
\overline{Q}_{r}^{\,s}(P)
    = \limsup_{n \to \infty} n^{r/s} V_{n,r}(P),
\qquad
\underline{Q}_{r}^{\,s}(P)
    = \liminf_{n \to \infty} n^{r/s} V_{n,r}(P).
\]
If both limits coincide, i.e.,
\[
\overline{Q}_{r}^{\,s}(P)
    = \underline{Q}_{r}^{\,s}(P)
    =: Q_{r}^{\,s}(P),
\]
then $Q_{r}^{\,s}(P)$ is called the \emph{$s$-dimensional quantization coefficient of order $r$} for the measure $P$.
\medskip

\nd \textbf{Interpretation.}
The quantization coefficient provides the asymptotic constant in the rate of decay of the quantization error:
\[
V_{n,r}(P) \sim Q_r^s(P)\, n^{-r/s} \quad \text{as } n \to \infty.
\]
Hence, $D_r(P)$ characterizes the scaling exponent, while $Q_r^s(P)$ gives the precise asymptotic constant depending on the geometry of the support of $P$ on $\D S_\rho^2$. 
 
\subsection{Applications}

Quantization on spheres has applications in various areas such as:
\begin{itemize}
  \item Directional statistics,  and earth and planetary sciences (e.g., wind directions, orientations).
  \item Quantization of probability measures on compact manifolds.
  \item Spherical coding and communication systems.
  \item Computer graphics and spherical data compression.
\end{itemize}

\section{Geometry of the Unit Sphere}

\medskip
\noindent\fbox{%
\parbox{0.96\linewidth}{%
\textbf{Coordinate conventions used throughout the paper.}
We work on the sphere
$S^2_\rho = \{x \in \mathbb{R}^3 : \|x\| = \rho\}$ using standard spherical coordinates
$(\rho,\theta,\phi)$ defined by
\[
x = \rho(\sin\phi\cos\theta,\;\sin\phi\sin\theta,\;\cos\phi),
\]
where
\[
\rho > 0,\qquad \theta \in [0,2\pi),\qquad \phi \in [0,\pi].
\]
Here $\phi$ denotes the \emph{colatitude} (measured from the north pole), while
$\theta$ denotes the longitude.

For readers familiar with geographical coordinates, the latitude $\lambda \in
[-\pi/2,\pi/2]$ is related to $\phi$ by
\[
\lambda = \frac{\pi}{2} - \phi.
\]
Throughout the paper, we consistently use the colatitude $\phi$ rather than
latitude $\lambda$, unless explicitly stated otherwise.
}}
\medskip

\medskip 
\noindent\textbf{Normalization to the unit sphere.}
Throughout this section we work on the unit sphere $\D S^{2} \subset \mathbb{R}^{3}$.
This choice is made purely for notational convenience and involves no loss of
generality. Indeed, for a sphere $\D S^{2}_{\rho}$ of arbitrary radius $\rho>0$, the
intrinsic (geodesic) distance and arc-length scale linearly with $\rho$, i.e.,
$d_{G,\rho}(x,y)=\rho\, d_{G,1}(x/\rho,y/\rho)$. Consequently, any geodesic curve
of intrinsic length $L$ on $\D S^{2}_{\rho}$ corresponds, after normalization, to a
curve of length $L/\rho$ on the unit sphere. Since the quantization problems
studied in this paper depend only on the intrinsic one-dimensional geometry
through arc-length, all results obtained on $\D S^{2}$ extend immediately to
$\D S^{2}_{\rho}$ by the simple rescaling $L \mapsto \rho L$. Working on the unit
sphere therefore allows for a cleaner exposition while preserving full
generality.
\subsection{Coordinates and metric}
The unit sphere in $\mathbb R^3$ is
\[
\D S^2=\{(x,y,z)\in\mathbb R^3:x^2+y^2+z^2=1\}.
\]
In spherical coordinates, any point on the sphere is represented by $x(\theta,\phi)$, where 
\[
x(\theta,\phi)=(\cos\theta\sin\phi,\ \sin\theta\sin\phi,\ \cos\phi),
\]
where $\theta\in[0,2\pi)$ is the longitude and $\phi\in[0,\pi]$ is the colatitude of the point.  On the other hand, in geodesic coordinates, the same point on the sphere is represented by $x(\phi,\gq)$, where 
\[
x(\phi, \gq)=(\cos\phi\cos\theta,\ \cos\phi\sin\theta,\ \sin\phi),
\]
where $\phi \in[0,\pi]$ is the colatitude and $\theta\in[0,2\pi)$ is the longitude of the point. 

The intrinsic or \emph{geodesic distance} is the central angle between two points:
\[
d_G(x,y)=\arccos\langle x,y\rangle.
\]
When $x,y$ lie on the same great circle, $d_G(x,y)$ is simply their minimal angular separation. When no ambiguity arises, we write $d_G$ for the geodesic distance on the underlying manifold.

\subsection{Great and small circles}
\begin{itemize}
\item The \textbf{equator} (a great circle) is $\Gamma=\{(\cos\gq,\sin\gq,0):0\le\gq<2\pi\}$.
\item A \textbf{small circle} at latitude $\lambda$ is
\[
C_\lambda=\{(\cos\lambda\cos\gq,\cos\lambda\sin\gq,\sin\lambda):0\le\gq<2\pi\},
\]
of intrinsic length $L_\lambda=2\pi\cos\lambda$.
\item A \textbf{great-circle arc} is a connected subset of a great circle of geodesic length $L\in(0,2\pi]$.
\end{itemize}

These are the one--dimensional manifolds on which our continuous models are supported.
 \section{Quantization Framework}
Recall that for a metric space $(M,d)$ and a probability measure $P$ on $M$, the distortion of a finite codebook $\alpha=\{a_1,\dots,a_n\}$ is
\[
V(\alpha;P)=\int_M \min_{a\in\alpha}d(x,a)^2\,dP(x), 
\]
the $n$th quantization error is $V_n(P)=\inf_{|\alpha|\le n}V(\alpha;P)$.
Given $\alpha$, the \emph{Voronoi region} of $a_j$ is
\[
R_j=\{x\in M:d(x,a_j)\le d(x,a_k)\text{ for all }k\}.
\]

\begin{proposition}[Centroid condition; heuristic form]\label{prop:centroid}
Assume $M$ is a smooth curve with arc-length parameter $s$ and geodesic distance $d_G$, and $P$ has a continuous density $f$ with respect to $ds$. If $\alpha^\ast=\{a_i^\ast\}$ is optimal and $R_i$ is the Voronoi region of $a_i^\ast$, then $a_i^\ast$ minimizes
\[
F_i(a)\;=\; \int_{R_i} d_G(x,a)^2\, f(x)\, ds
\]
with respect to $a$ lying on the curve. In particular, when $R_i$ is a geodesic interval and $d_G$ coincides with interval length, $a_i^\ast$ is the \emph{midpoint} of $R_i$ when $f$ is constant.
\end{proposition}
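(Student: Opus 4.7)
The plan is to split the proof into two parts: a general variational argument establishing the first assertion (each optimal codepoint minimizes its own restricted functional), followed by an elementary one-dimensional calculation for the midpoint characterization in the uniform case.

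For the first part, I would decompose the total distortion via the Voronoi partition as
\[
V(\alpha^\ast;P)=\sum_{j}\int_{R_j} d_G(x,a_j^\ast)^2 f(x)\,ds,
\]
and then perturb only the $i$-th codepoint. Specifically, for any $a\in M$, form $\alpha'=(\alpha^\ast\setminus\{a_i^\ast\})\cup\{a\}$. For every $x\in R_j$ with $j\neq i$ we have $\min_{b\in\alpha'}d_G(x,b)^2\le d_G(x,a_j^\ast)^2$, and for every $x\in R_i$ we have $\min_{b\in\alpha'}d_G(x,b)^2\le d_G(x,a)^2$. Integrating over $M$ yields
\[
V(\alpha';P)\le \sum_{j\neq i}\int_{R_j}d_G(x,a_j^\ast)^2 f(x)\,ds+F_i(a).
\]
Since $\alpha^\ast$ is optimal, $V(\alpha^\ast;P)\le V(\alpha';P)$; subtracting the common sum gives $F_i(a_i^\ast)\le F_i(a)$ for every $a\in M$, which is exactly the claimed minimality property.

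For the second part, I would parametrize the geodesic interval $R_i$ by arc-length, writing $R_i=[s_1,s_2]$, so that under the stated hypothesis $d_G(x,a)=|s(x)-s(a)|$ for $a$ in the relevant range. With $f$ constant (say $f\equiv c$), the restricted functional reduces to the elementary quadratic
\[
F_i(a)=c\int_{s_1}^{s_2}(s-a)^2\,ds,
\]
which is smooth and strictly convex in $a$. Differentiating and setting $F_i'(a)=-2c\int_{s_1}^{s_2}(s-a)\,ds=0$ gives $a=(s_1+s_2)/2$, the arc-length midpoint of $R_i$; strict convexity confirms uniqueness.

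The main obstacle, and the step that requires the greatest care, is verifying that the midpoint candidate minimizes $F_i$ not only among $a\in R_i$ but among all $a\in M$. On a closed spherical curve such as a great or small circle, a candidate $a$ lying far from $R_i$ can be closer to points of $R_i$ via the opposite arc, so $d_G(x,a)$ need not equal the arc-length difference $|s(x)-s(a)|$. The hypothesis that ``$d_G$ coincides with interval length'' is precisely designed to exclude this wrap-around phenomenon; to make the argument fully rigorous I would invoke it by noting that any minimizer of $F_i$ on $M$ must lie in (or on the closure of) $R_i$ whenever $R_i$ is contained in a geodesic ball below the injectivity radius, a condition automatically satisfied in all spherical curves treated in the subsequent sections.
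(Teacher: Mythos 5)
Your proposal is correct, and it is actually more complete than the paper's own argument, which is explicitly presented as an ``Idea of proof.'' For the second half (the midpoint characterization under constant density), both you and the paper reduce to the same arc-length parametrization of $R_i$ and differentiate the quadratic $\int (s-s_0)^2 f(s)\,ds$; the calculation and the strict convexity argument are essentially identical.

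Where you differ from the paper is in the first half. The paper simply writes down $F_i$ and differentiates, implicitly taking for granted the key logical step that an optimal codebook forces each $a_i^\ast$ to minimize $F_i$ over $M$. You instead prove this explicitly with the standard comparison argument from quantization theory: replace $a_i^\ast$ by an arbitrary $a$, observe that keeping the old partition $\{R_j\}$ can only overestimate the new distortion $V(\alpha';P)$, and subtract the common terms to extract $F_i(a_i^\ast)\le F_i(a)$. This is cleaner and more rigorous than the paper's variational sketch, and it avoids needing any smoothness of $F_i$ for the first assertion (the differentiability is only needed later, for the midpoint identification). Your closing observation about the wrap-around issue on a closed curve --- that a far-away $a$ might approach $R_i$ along the opposite arc so that $d_G\neq |s(x)-s(a)|$ --- is also a genuine subtlety that the paper's proof glosses over with the phrase ``each (short) geodesic interval''; your proposed resolution via the injectivity radius is the right one. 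In short, same core computation, but your route makes explicit two steps the paper leaves as heuristics.
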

\emph{Idea of proof.}
For variations of $a$ constrained to lie on the curve $M$, write the local functional
\[
F_i(a)=\int_{R_i} d_G(x,a)^2\,f(x)\,ds,
\]
and differentiate under the integral sign along any smooth curve $a(t)$ in $M$ with $a(0)=a$.
When $R_i$ is a geodesic interval and $d_G$ coincides with arc–length, set $s$ as the arc–length
coordinate on $R_i=[s_L,s_R]$ and let $s_0$ be the coordinate of $a$; then
\[
F_i(a)=\int_{s_L}^{s_R} (s-s_0)^2\,f(s)\,ds,\qquad
\frac{d}{ds_0}F_i(a) = 2\!\int_{s_L}^{s_R} (s_0-s)\,f(s)\,ds.
\]
Hence the first variation vanishes exactly when
\[
\int_{s_L}^{s_R} (s-s_0)\,f(s)\,ds=0,
\]
i.e., $s_0$ is the intrinsic (conditional) mean of $s$ over the cell. In particular, if $f$ is constant,
symmetry forces $s_0=(s_L+s_R)/2$, so $a$ is the midpoint of $R_i$. The second variation satisfies
\[
\frac{d^2}{ds_0^2}F_i(a)=2\int_{s_L}^{s_R}f(s)\,ds>0,
\]
so the critical point is a strict minimizer. A fully rigorous proof phrases the same calculation using the
Riemannian gradient of $x\mapsto d_G(x,a)^2$ and convexity along geodesic segments of $M$, which
justifies differentiating under the integral and yields uniqueness on each (short) geodesic interval. \qed

\begin{remark}
This is the spherical analogue of the Euclidean centroid rule: compute the weighted mean of each cell and project it to the sphere.  It forms the basis of spherical $k$--means algorithms.
\end{remark}

For Euclidean quantization, if $\alpha^*=\{a_1^*,\dots,a_n^*\}$ is a spherical optimal set of $n$-means, each $a_j^*$ is the conditional expectation of a random variable $X$ with distribution $P$ over its cell. On the sphere, the condition involves normalization, i.e., 
\begin{equation}\label{eq:centroid}
a_j^* = 
\frac{\sum_{x_i\in R_j} p_i x_i}
{\left\|\sum_{x_i\in R_j} p_i x_i\right\|}.
\end{equation}

\begin{remark}
Equation \eqref{eq:centroid} states: compute the weighted Euclidean mean of the assigned points
and project it back to the sphere by normalization. If $P$ is a continuous Borel probability measure, then the result follows analogously, see \eqref{eq5}. This is the discrete analogue of the extrinsic centroid condition discussed in Section~\ref{subsec:centroid-extrinsic-intrinsic}.
\end{remark}

\section{Uniform quantization on a geodesic circle: the equator}

\subsection{Setup and intuition}
Let $P$ be the uniform probability distribution on the equator $\Gamma$ with respect to arc-length. Intrinsically, $\Gamma$ is a \emph{circle of length $2\pi$} with the metric
\[
d_\Gamma\big(\gq_1,\gq_2\big)\;=\;\min\{|\gq_1-\gq_2|,\ 2\pi-|\gq_1-\gq_2|\}.
\]
Because the distribution and geometry are rotation-invariant, one expects the optimal $n$-means to form a regular $n$-gon (i.e., equally spaced angles) and the Voronoi regions to be $n$ congruent arcs of length $2\pi/n$.

\subsection{Optimality of uniform partitions}
\begin{theorem1}[Equator: structure of optimal $n$-means]\label{thm:equator-structure}
Let $P$ be uniform on $\Gamma$. For each $n\ge 1$, any optimal set $\alpha^\ast$ of $n$-means consists (up to rotation) of $n$ equally spaced points on $\Gamma$. The Voronoi partition is the uniform partition into $n$ arcs of equal length $2\pi/n$, and each codepoint is the midpoint of its arc.
\end{theorem1}

\begin{proof}
We sketch a standard argument based on symmetry and convexity.

\emph{Step 1 (Averaging/symmetry).} Let $\alpha$ be any codebook. Average the configuration over all rotations of $\Gamma$; by convexity of the squared-distance distortion and Jensen's inequality, the rotationally averaged configuration has no larger mean distortion. Hence there exists an optimal configuration invariant under a rotation by $2\pi/n$, i.e., equally spaced.

\emph{Step 2 (Voronoi midpoints).} By Proposition~\ref{prop:centroid}, with constant density and geodesic interval geometry on each cell, the minimizer in a cell is its midpoint. For a periodic circle with equal cells, midpoints are equally spaced, agreeing with Step~1.

\emph{Step 3 (Uniqueness up to rotation).} If two optimal configurations differ, rotate one to align a single point with the other; invariance under $2\pi/n$-rotations forces coincidence.
\end{proof}

\medskip
\begin{remark1} (Relation to classical one-dimensional quantization). 
The argument above is an intrinsic reformulation of the classical one-dimensional
uniform quantization problem. Under arc-length parameterization, the equator
$\Gamma$ is isometric to a circle of length $L = 2\pi$, and the squared geodesic
distance coincides with the squared Euclidean distance on this circle modulo
periodicity. It is a standard result in one-dimensional Euclidean quantization
theory that, for a uniform distribution on an interval or circle of length $L$,
the optimal $n$-means are equally spaced points and the quantization error equals
$V_n = L^2/(12n^2)$; see, for example, Graf and Luschgy~\cite{GrafLuschgy2000}.
This identification completes the argument on $\Gamma$ in a fully rigorous manner.
\end{remark1} 

\subsection{Exact quantization error on the equator}
\begin{theorem1}[Equator: explicit $V_n$]\label{thm:equator-Vn}
Let $P$ be uniform on $\Gamma$ with total geodesic length $L=2\pi$. For squared geodesic distortion,
\[
V_n(P)\;=\;\frac{L^2}{12\,n^2}\;=\;\frac{(2\pi)^2}{12\,n^2}\;=\;\frac{\pi^2}{3\,n^2}.
\]
\end{theorem1}

\begin{proof}
By Theorem~\ref{thm:equator-structure}, the optimal partition has $n$ congruent arcs of length $h=L/n=2\pi/n$, each represented by its midpoint. On one cell, let $t\in[-h/2,h/2]$ denote the geodesic coordinate relative to the midpoint. The conditional mean squared error on that cell equals
\[
\frac{1}{h}\int_{-h/2}^{h/2} t^2\,dt \;=\; \frac{h^2}{12}.
\]
The cell's probability mass is $h/L$, so its contribution to the global error is $(h/L)\cdot (h^2/12)$. Summing over $n$ cells,
\[
V_n(P) \;=\; n\cdot \frac{h}{L}\cdot \frac{h^2}{12} \;=\; \frac{n\,h^3}{12L}\;=\;\frac{L^2}{12n^2}.
\]
With $L=2\pi$, we obtain $V_n(P)=\pi^2/(3n^2)$.
\end{proof}

\begin{remark1}[Relation to the line]
The formula $V_n=L^2/(12n^2)$ matches the classical one-dimensional uniform result on an interval or circle of total length $L$ with squared error and midpoints. The sphere enters \emph{only} through $L$, the intrinsic length of the support (see \cite{RosenblattRoychowdhury2023}).
\end{remark1}

\section{Small circles parallel to the equator}

\subsection{Geometry and effective metric}
Fix a latitude $\lambda\in(-\frac{\pi}{2},\frac{\pi}{2})$. The small circle
\[
\mathcal{C}_\lambda=\{(\cos\lambda\cos\gq,\cos\lambda\sin\gq,\sin\lambda):\ 0\le \gq<2\pi\}
\]
has intrinsic length $L_\lambda=2\pi\cos\lambda$. Uniform points on $\mathcal{C}_\lambda$ (with respect to arc-length) are equidistributed in $\gq$, but the \emph{metric scale} along the latitude is $\cos\lambda$: small angular increments $d\gq$ correspond to geodesic arc-length $\cos\lambda\, d\gq$.

\begin{theorem1}[Small circle: structure and error]\label{thm:small-circle}
Let $P_\lambda$ be the uniform distribution on $\mathcal{C}_\lambda$ with respect to geodesic arc-length. Then for each $n\ge 1$:
\begin{enumerate}[label=(\roman*), leftmargin=1.5em]
  \item An optimal set of $n$-means is (up to rotation in $\gq$) $n$ equally spaced points on $\mathcal{C}_\lambda$, with Voronoi cells of arc-length $L_\lambda/n$ and representatives at cell midpoints.
  \item The quantization error is
  \[
  V_n(P_\lambda)\;=\;\frac{L_\lambda^2}{12\,n^2}\;=\;\frac{(2\pi\cos\lambda)^2}{12\,n^2}\;=\;\frac{\pi^2}{3}\,\frac{\cos^2\lambda}{n^2}.
  \]
\end{enumerate}
\end{theorem1}

\begin{proof}
The proof is identical to the equator case, replacing the total length $L$ by $L_\lambda=2\pi\cos\lambda$. Rotation invariance around the axis through the poles yields equally spaced codepoints; uniform density and geodesic intervals force midpoints as representatives. The one-cell computation gives $h=L_\lambda/n$, and the same calculation as in Theorem~\ref{thm:equator-Vn} yields $V_n=L_\lambda^2/(12n^2)$.
\end{proof}

\begin{remark1}[Latitude effect]
The factor $\cos\lambda$ shrinks the circle as one moves away from the equator; consequently the error decays by $\cos^2\lambda$. In particular, for fixed $n$, $V_n$ is maximal at the equator and tends to $0$ as $|\lambda|\to \frac{\pi}{2}$ (the circle collapses).
\end{remark1}

\section{Great circular arcs: continuous and discrete models}

\subsection{Uniform continuous model on an arc}
Let $A$ be a connected arc of a great circle with geodesic length $L\in(0,2\pi]$, equipped with the uniform distribution $P_A$ with respect to arc-length. The metric on $A$ is the restriction of geodesic distance along that great circle; intrinsically, $A$ is just a \emph{line segment of length $L$}.

\begin{theorem1}[Uniform arc: structure and error]\label{thm:arc-continuous}
Let $P_A$ be uniform on a great circular arc $A$ of length $L$. For each $n\ge 1$:
\begin{enumerate}[label=(\roman*), leftmargin=1.5em]
  \item The optimal set of $n$-means is obtained by partitioning $A$ into $n$ adjacent sub-arcs of equal length $L/n$ and placing each representative at the midpoint of its sub-arc.
  \item The $n$th quantization error is
  \[
  V_n(P_A)\;=\;\frac{L^2}{12\,n^2}.
  \]
\end{enumerate}
\end{theorem1}

\begin{proof}
As in Theorem~\ref{thm:equator-structure}, convexity of the squared-distance on geodesic intervals and the uniform density imply that each Voronoi cell must be a contiguous interval and the representative is its midpoint (Proposition~\ref{prop:centroid}). If two adjacent cells had unequal lengths, transferring an $\varepsilon$-slice from the larger cell to the smaller one strictly decreases the total cost by a standard ``balancing'' calculation, contradicting optimality. Hence all cells have length $h=L/n$. The one-cell calculation with midpoints yields the error $h^2/12$ per cell in conditional mean, and aggregating over $n$ cells gives $L^2/(12n^2)$ as before.
\end{proof}

\begin{remark1}[From arc to full circle]
Taking $L\to 2\pi$ recovers Theorem~\ref{thm:equator-Vn}. The only difference between the arc and circle is the wrap-around periodicity; the per-cell analysis is identical.
\end{remark1}

\subsection{Finite discrete uniform model on an arc}\label{subsec:discrete}
Let $A$ be as above, and let $X=\{x_1,\dots,x_m\}\subset A$ be $m$ equally spaced points on $A$ (with respect to arc-length), with the discrete \emph{uniform} probability $P_X=\frac{1}{m}\sum_{j=1}^m \delta_{x_j}$. We consider squared \emph{geodesic} distortion $d_G^2(\cdot,\cdot)$ measured along the great circle (i.e.\ by arc-length).

\begin{defi}[Contiguous clustering on an ordered set]
Index the points in order along $A$: $x_1,\dots,x_m$. A \emph{contiguous $n$-clustering} partitions $\{1,\dots,m\}$ into $n$ contiguous blocks $B_1,\dots,B_n$ (i.e.\ $B_1=\{1,\dots,k_1\}$, $B_2=\{k_1+1,\dots,k_2\}$, etc.). The \emph{cluster center} for block $B$ is any minimizer $a\in A$ of $\sum_{j\in B} d_G^2(x_j,a)$ (restricted to $A$).
\end{defi}

\begin{lemma}[One-block center is the (discrete) midpoint]\label{lem:block-mid}
Fix a contiguous block $B=\{i,\dots,j\}$ and consider $f(a)=\sum_{\ell=i}^j d_G^2(x_\ell,a)$ for $a$ constrained to $A$. Then any minimizer $a^\ast$ lies at the \emph{geodesic midpoint} of the endpoints of the block, and if the block has odd cardinality, $a^\ast$ is the central data point; if even, any point in the middle geodesic segment between the two central data points minimizes $f$.
\end{lemma}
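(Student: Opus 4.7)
The plan is to reduce this one-block problem to a classical one-dimensional least-squares problem by invoking the arc-length parameterization of $A$. The key observation is that along a great-circle arc, the geodesic distance coincides with Euclidean distance in the arc-length coordinate, so the spherical nonlinearity disappears entirely once we restrict to $A$. This is exactly the reduction principle already used in the continuous theorems of Sections 4--6, and reusing it here is, I think, the cleanest route.

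Concretely, I would let $s:A\to[0,L]$ denote the arc-length parameter, set $s_\ell:=s(x_\ell)$ and $t:=s(a)$, and observe that since the $x_\ell$ are equally spaced with common spacing $h$ we have $s_\ell=s_i+(\ell-i)h$ for $\ell=i,\dots,j$. Because $A$ lies on a single great circle and contains no antipodal pair, $d_G(x_\ell,a)=|s_\ell-t|$ for every $\ell$. Substituting gives
\[
f(a)=\sum_{\ell=i}^{j}(s_\ell-t)^2,
\]
a strictly convex quadratic in $t$ on the convex domain $[0,L]$. Differentiating and setting the derivative to zero produces the unique critical point $t^{*}=\frac{1}{|B|}\sum_\ell s_\ell$, and the equal-spacing symmetry collapses this average to $(s_i+s_j)/2$, i.e.\ the arc-length midpoint of the two extreme data points of $B$. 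Translating back to $A$, $a^{*}$ is the geodesic midpoint of $x_i$ and $x_j$.

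The two parity cases are then read off directly. If $|B|=2k+1$, the midpoint $(s_i+s_j)/2=s_{i+k}$ is itself a data point, so $a^{*}=x_{i+k}$. If $|B|=2k$, the midpoint $(s_i+s_j)/2=(s_{i+k-1}+s_{i+k})/2$ is the geodesic midpoint of the two central data points, which lies in the interior of the middle sub-arc between them, as asserted.

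The only potential obstacle is verifying that the interior critical point is genuinely the constrained minimizer on $A$, i.e.\ that no boundary minimum on $\partial A$ can occur. This is immediate: the average of any coordinates in $[s_i,s_j]$ lies in $[s_i,s_j]\subset[0,L]$, so $t^{*}$ is strictly interior, and strict convexity of $f$ then gives both existence and uniqueness without further work. No intrinsic Riemannian-gradient calculation is needed, which is the main payoff of the arc-length reduction.
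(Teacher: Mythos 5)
Your proof follows essentially the same route as the paper's: pass to arc-length coordinates on $A$, observe that $f$ becomes a strictly convex quadratic in that coordinate, solve $f'=0$ to land on the arithmetic mean, and use equal spacing to identify the mean with the midpoint of the block's extreme points; your explicit interior-critical-point check is a welcome addition that the paper compresses into ``parity considerations.'' One point worth flagging: your (correct) argument produces a \emph{unique} minimizer, the arc-length mean, and strict convexity rules out any other minimizer; this actually contradicts the lemma's even-cardinality clause that ``any point in the middle geodesic segment between the two central data points minimizes $f$,'' which is the behavior of the $L^1$ (median) objective rather than the $L^2$ one treated here. Your final sentence tries to reconcile this by noting the unique minimizer lies in that segment, but that is not what the lemma asserts; the same tension is latent in the paper's own proof since it too invokes strict convexity.
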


\begin{proof}
Along $A$, we can use a coordinate $t$ measuring arc-length from the left endpoint of $A$. Then $f$ is a strictly convex quadratic in $t$ on the interval spanning the block. Differentiating and setting $f'(t)=0$ yields that $t^\ast$ is the arithmetic mean of the $\{t_\ell\}_{\ell\in B}$; since the $\{t_\ell\}$ are equally spaced, the mean lies at the (geodesic) midpoint of the block. Parity considerations yield the rest.
\end{proof}

\begin{theorem}[Discrete-uniform arc: optimality and error]\label{thm:discrete-arc}
Let $P_X$ be the discrete-uniform measure on $m$ equally spaced points on an arc $A$ of length $L$. For each $n\le m$:
\begin{enumerate}[label=(\roman*), leftmargin=1.5em]
  \item An optimal $n$-clustering is obtained by partitioning into $n$ contiguous blocks whose sizes differ by at most $1$; the associated block centers (as in Lemma~\ref{lem:block-mid}) form an optimal set of $n$-means.
  \item If $m$ is a multiple of $n$, so each block has $m/n$ points and span $L/n$, then
  \[
  V_n(P_X)\;=\;\frac{1}{m}\cdot n\sum_{k=-(m/n-1)/2}^{(m/n-1)/2}\!\!\!\!\!\! \bigg(\frac{k\,L}{m}\bigg)^2,
  \]
  which, for large $m$, converges to the continuous value $L^2/(12n^2)$.
\end{enumerate}
\end{theorem}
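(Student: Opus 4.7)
The plan is to reduce the problem to one-dimensional uniform clustering along arc-length and then apply two classical structural arguments: contiguity of optimal clusters and near-balance of their sizes. Under the arc-length parameterization $t \in [0,L]$, the equally spaced data points correspond to coordinates $t_j$ with common spacing $L/m$, and $d_G^2(x_j,a) = (t_j - s)^2$ whenever $a\in A$ is identified with its coordinate $s \in [0,L]$. Under this identification, the problem is indistinguishable from classical discrete uniform quantization of $m$ equally spaced points on an interval.

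For part (i), I would first establish contiguity via an interchange argument: if an optimum assigned points $x_i$ and $x_j$ with $i<j$ to the same cluster while assigning some intermediate $x_k$ to a different cluster, swapping $x_k$ with a point of the first cluster (and reoptimizing the two affected centers to the block midpoints via Lemma~\ref{lem:block-mid}) would strictly decrease the total cost by strict convexity of the per-cluster quadratic, contradicting optimality. Hence the $n$ optimal clusters correspond to contiguous blocks $B_1,\dots,B_n$. For the size-balance claim, a direct computation shows that a block of $r$ equally spaced points with spacing $L/m$, represented by its midpoint, contributes within-block squared distortion $(L/m)^2(r^3 - r)/12$, which is strictly convex in $r$. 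If two adjacent blocks had sizes $p$ and $q$ with $p \ge q+2$, transferring the boundary point from the larger block to the smaller one and re-centering would strictly decrease the total distortion by this convexity, contradicting optimality; hence $\bigl||B_i|-|B_{i+1}|\bigr|\le 1$ for every adjacent pair, and therefore globally. Combined with Lemma~\ref{lem:block-mid}, this proves (i).

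For part (ii), when $m = qn$ the balanced partition consists of $n$ congruent blocks of $q = m/n$ equally spaced points with midpoint representatives. Each block contributes $\sum_{k=-(q-1)/2}^{(q-1)/2} (kL/m)^2$ to the unnormalized squared error, and multiplying by the uniform weight $1/m$ and summing over the $n$ identical blocks yields exactly the displayed formula. The convergence to $L^2/(12n^2)$ as $m \to \infty$ is an elementary Riemann-sum passage: the normalized inner sum $(1/q)\sum_{k} (kL/m)^2$ approximates $\frac{1}{L/n}\int_{-L/(2n)}^{L/(2n)} t^2\,dt = (L/n)^2/12$, in exact agreement with the continuous value from Theorem~\ref{thm:arc-continuous}.

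The main obstacle I anticipate is formulating the interchange and transfer steps cleanly, without tedious case analysis on cluster parities or on the exact locations of the midpoints relative to the data points. The route I would follow is to package every within-block cost as the single strictly convex function $r \mapsto (L/m)^2(r^3-r)/12$ of block size, so that both the contiguity and the balance conclusions collapse into a single strict-convexity inequality rather than a sequence of ad hoc swaps.
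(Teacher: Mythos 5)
Your overall plan follows the paper's route: reduce to one-dimensional discrete quantization along arc-length, establish contiguity and size-balance for part~(i), and obtain part~(ii) by a direct summation and a Riemann-sum limit. Your closed-form within-block cost $g(r)=(L/m)^2(r^3-r)/12$ is a genuinely nice packaging that the paper does not make explicit, and it would streamline the whole proof if used correctly.

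However, there is a real gap in the balance step. You argue that no two \emph{adjacent} blocks can have sizes differing by $2$ or more, and then assert ``and therefore globally.'' That implication is false: adjacent-balanced but globally unbalanced size profiles exist, and your adjacent-transfer argument does not rule them out. Concretely, take $m=12$, $n=3$ and block sizes $(3,4,5)$: every adjacent pair differs by exactly $1$, yet using $g(r)=r^3-r$ (dropping the common factor $(L/m)^2/12$) the cost is $g(3)+g(4)+g(5)=24+60+120=204$, strictly larger than $3g(4)=180$ for the balanced $(4,4,4)$; moreover no single adjacent transfer from $(3,4,5)$ yields a strict decrease (the neighbors $(4,3,5)$ and $(3,5,4)$ have the same cost $204$, and $(2,5,5)$, $(3,3,6)$ are worse). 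So your argument terminates at $(3,4,5)$ without establishing the theorem's conclusion. The fix is short and is exactly what your $g$-formulation is built for: if $\max_i|B_i|-\min_i|B_i|\ge 2$, shift \emph{every} block boundary strictly between the largest and smallest block by one index in the appropriate direction, so that all intermediate block sizes are preserved and only the two extremal sizes change from $(p,q)$ to $(p-1,q+1)$; strict convexity of $g$ then gives a strict decrease $g(p)+g(q)>g(p-1)+g(q+1)$, contradicting optimality. This single step yields global balance directly, without passing through adjacent balance. A smaller imprecision is in the contiguity step: ``swapping $x_k$ with a point of the first cluster'' does not obviously decrease cost as stated; the cleaner justification is that for any fixed codebook, the one-dimensional Voronoi cells are intervals, so the cost-minimizing assignment is automatically contiguous, after which block sizes are optimized via $g$ as above.
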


\begin{proof}
(i) By convexity of the one-block objective and the exchange argument from Theorem~\ref{thm:arc-continuous}, blocks must be contiguous and as balanced as possible. (ii) With uniform spacing $\Delta=L/(m-1)$ (or approximately $L/m$ for large $m$), the sum of squared distances in a block centered at its midpoint is a symmetric discrete quadratic sum. Dividing by $m$ and summing over $n$ blocks yields the stated formula, which approaches the continuous integral $h^2/12$ per block with $h=L/n$ as $m\to\infty$.
\end{proof}

\begin{remark}[When $m<n$]
If there are fewer data points than codepoints, any optimal solution places each data point at itself (zero distortion for those) and distributes the remaining codepoints arbitrarily; the error is $0$.
\end{remark}

\section{A unifying principle and a summary table}

\subsection{A one-line principle}
All three uniform cases (equator, small circle, great circular arc) reduce to the following:

\begin{proposition1}[Uniform one-dimensional geodesic principle]\label{prop:unifying}
Let $(C,d)$ be a one-dimensional geodesic substrate (circle or interval) of total geodesic length $L$, and let $P$ be uniform with respect to arc-length. For squared distortion, the optimal partition into $n$ Voronoi cells is \emph{uniform}, each cell has length $L/n$, each codepoint is the midpoint of its cell, and
\[
V_n(P) \;=\; \frac{L^2}{12\,n^2}.
\]
\end{proposition1}

\begin{proof}
The proof combines: (a) convexity of squared distance on geodesic intervals; (b) the centroid condition (midpoints for uniform density); and (c) the balancing/exchange argument showing equal cell lengths are optimal. The calculation of $V_n$ follows from integrating $t^2$ on $[-h/2,h/2]$ with $h=L/n$ and normalizing by the total length.
\end{proof}
The unified reduction principle described above is illustrated explicitly in the summary
presented in Table 1.

\subsection{Table~1} 
\medskip
\begin{center}
\begin{tabular}{|l|c|c|}
\hline
\textbf{Support (uniform)} & \textbf{Total geodesic length $L$} & \textbf{$V_n$ for squared geodesic distortion}\\
\hline
Equator (great circle) & $L=2\pi$ & $V_n=\dfrac{(2\pi)^2}{12\,n^2}=\dfrac{\pi^2}{3n^2}$\\
\hline
Small circle at latitude $\lambda$ & $L_\lambda=2\pi\cos\lambda$ & $V_n=\dfrac{(2\pi\cos\lambda)^2}{12\,n^2}=\dfrac{\pi^2}{3}\dfrac{\cos^2\lambda}{n^2}$\\
\hline
Great circular arc of length $L$ & $L\in(0,2\pi]$ & $V_n=\dfrac{L^2}{12\,n^2}$\\
\hline
\end{tabular}
\end{center}
\medskip

\begin{remark1}[Chordal vs.\ geodesic distance]
All results above use \emph{geodesic} distance (intrinsic arc-length). If one instead uses \emph{chordal} (Euclidean) distance in $\D R^3$, optimal sets generally \emph{shift slightly} toward regions where the chordal metric underestimates arc-length; closed forms change, though for small cell sizes the two metrics agree up to second order.
\end{remark1}

\section{Worked examples}
We now calculate spherical optimal sets of $n$-means for different discrete probability measures supported on finitely many points of $\D S^2$.

\begin{exam}[Equator $\gG$ with $n=3$]
On $\Gamma$, take codepoints at angles $0$, $2\pi/3$, $4\pi/3$. Cells are arcs of length $2\pi/3$ centered at these points. The error is
\[
V_3=\frac{(2\pi)^2}{12\cdot 3^2}=\frac{4\pi^2}{108}=\frac{\pi^2}{27}.
\]
\end{exam}

\begin{exam}[Small circle with $\lambda=\pi/3$, $n=4$]
Here $L_\lambda=2\pi\cos(\pi/3)=\pi$. Then
\[
V_4=\frac{L_\lambda^2}{12\cdot 4^2}=\frac{\pi^2}{192}.
\]
\end{exam}

\begin{exam}[Great circular arc of length $L=\pi$ with $n=2$]
Two equal sub-arcs of length $\pi/2$; representatives are midpoints at distances $\pi/4$ from the ends. The error is $V_2=L^2/(12\cdot 2^2)=\pi^2/48$.
\end{exam}

\begin{exam}[Discrete-uniform arc: $m=9$ points, $n=3$ clusters]
With equally spaced points on an arc of length $L$, take three contiguous blocks of size $3$; each block center is the middle point (by Lemma~\ref{lem:block-mid}). The exact discrete $V_3$ is the average (over all points) of squared geodesic distances to their block centers; as $m$ grows, this converges to $L^2/(12\cdot 3^2)$.
\end{exam}

\begin{exam}[Antipodal pair: intrinsic vs.\ extrinsic centroids]
Let
\[
x_1=(1,0,0), \qquad x_2=(-1,0,0)
\]
with equal weights \(p_1=p_2=\tfrac12\), and let
\[
P=\tfrac12(\delta_{x_1}+\delta_{x_2})
\]
be a discrete probability measure on \(\D S^2\).

\medskip
\noindent
\textbf{Intrinsic (Fr\'echet) formulation.}
For \(n=1\), we minimize the Fr\'echet functional
\[
F(a)=\tfrac12 d_G(x_1,a)^2+\tfrac12 d_G(x_2,a)^2, \qquad a\in \D S^2.
\]
Since \(x_1\) and \(x_2\) are antipodal, for every \(a\in \D S^2\),
\[
d_G(x_1,a)+d_G(x_2,a)=\pi.
\]
By symmetry, \(F(a)\) is constant along the great circle orthogonal to the axis through
\(x_1\) and \(x_2\). Hence, the intrinsic Fr\'echet mean is \emph{not unique}: the Fr\'echet
mean set is precisely this equatorial great circle.

For example, choosing
\[
a^*=(0,1,0) \quad \text{or} \quad a^*=(0,-1,0),
\]
we have
\[
d_G(x_i,a^*)=\frac{\pi}{2}, \quad i=1,2,
\]
and therefore
\[
V_1(P)
=\frac12\left(\frac{\pi}{2}\right)^2
+\frac12\left(\frac{\pi}{2}\right)^2
=\frac{\pi^2}{4}.
\]

\medskip
\noindent
\textbf{Extrinsic centroid.}
The extrinsic centroid (see Section~1.13) is obtained by first computing the Euclidean mean
\[
m=\tfrac12(x_1+x_2)=0\in\mathbb{R}^3.
\]
Since \(m=0\), normalization onto the sphere is not possible, and hence the extrinsic
centroid is undefined in this case.

\medskip
\noindent
\textbf{Two-means case.}
For \(n=2\), choosing the codebook \(\alpha=\{x_1,x_2\}\) yields zero distortion, and thus
\[
V_2(P)=0.
\]

\medskip
\noindent
\textbf{Remark.}
This example illustrates that for antipodally symmetric distributions on the sphere,
intrinsic Fr\'echet means may be non-unique, while extrinsic centroids may fail to exist.
Such behavior has no Euclidean analogue and highlights the geometric nature of
quantization on spherical manifolds.
\end{exam}

\begin{exam}[Three equally spaced equatorial points]
Let $x_k=(\cos(2\pi(k-1)/3),\sin(2\pi(k-1)/3),0)$ with equal weights.
Then
\[
V_1(P)=\frac{\pi^2}{4},\qquad 
V_2(P)=\frac{4\pi^2}{27},\qquad 
V_3(P)=0.
\]
\end{exam}

\begin{exam}[Two-point nonuniform distribution]
For $x_1=(1,0,0)$, $x_2=(-1,0,0)$ with $p_1=3/4$, $p_2=1/4$,
minimizing
\(
V_1(P)=\frac34\theta^2+\frac14(\pi-\theta)^2
\)
yields $\theta=\pi/4$ and $V_1(P)=3\pi^2/16$.
\end{exam}

\begin{exam}[Regular tetrahedron]
Let $x_1,\dots,x_4$ be the vertices of a regular tetrahedron on $\D S^2$.
Then $x_i\cdot x_j=-1/3$ for $i\neq j$, so $d_G(x_i,x_j)=\arccos(-1/3)\approx1.9106$.
For $n=1$, any vertex can serve as $a^*$, giving
\[
V_1(P)=\tfrac34(\arccos(-1/3))^2\approx2.739.
\]
\end{exam}

\begin{exam}[Uniform discrete set on a small circle]
Fix latitude $\lambda\in(0,\pi/2)$ and $m$ equally spaced points
$x_k=(\cos\lambda\cos(2\pi(k-1)/m),\cos\lambda\sin(2\pi(k-1)/m),\sin\lambda)$.
Then for any $n\le m$, the optimal configuration preserves longitudes
and the quantization error scales as
\[
V_n(P_\lambda)=\cos^2\lambda\,V_n(P_0).
\]
\end{exam}

\begin{exam}[Spherical triangle]
Let $x_1=(1,0,0)$, $x_2=(0,1,0)$, $x_3=(0,0,1)$ with equal weights.
The Euclidean centroid $(1,1,1)/3$ projects to
$a^*=(1,1,1)/\sqrt3$, giving
\[
d_G(x_i,a^*)=\arccos(1/\sqrt3)\approx0.9553,\qquad
V_1(P)\approx0.9126.
\]
\end{exam}

\section{Pedagogical appendix: derivations and variations}

\subsection{The one-cell computation in detail}
Let a cell be a geodesic interval of length $h$ with midpoint $a$. Parameterize by $t\in[-h/2,h/2]$ where $t$ is arc-length from $a$. For uniform density, the (conditional) mean squared distance in the cell is
\[
\frac{1}{h}\int_{-h/2}^{h/2} t^2\,dt = \frac{1}{h}\cdot \frac{(h/2)^3-( -h/2)^3}{3} = \frac{h^2}{12}.
\]
Multiplying by the cell mass $h/L$ and summing over $n$ cells gives $V_n=L^2/(12n^2)$.

\subsection{Why equal-length cells are optimal (exchange argument)}
Suppose two adjacent cells have lengths $h_1$ and $h_2$ with $h_1>h_2$. Shift a small $\varepsilon$ of length from cell $1$ to cell $2$. The first-order change in total cost is proportional to $h_1\varepsilon/6 - h_2\varepsilon/6 = (h_1-h_2)\varepsilon/6>0$, so decreasing $h_1$ and increasing $h_2$ reduces cost until $h_1=h_2$. Iterating across the partition yields equal lengths.

\subsection{Small-circle metric scaling}
At latitude $\lambda$, the infinitesimal arc-length along the parallel is $ds=\cos\lambda\, d\phi$; thus the induced one-dimensional metric is scaled by $\cos\lambda$, and length $L_\lambda=2\pi\cos\lambda$. All uniform quantization formulas follow by substituting $L\mapsto L_\lambda$.

\subsection{Beyond squared error}
Other distortion exponents $r>0$ lead to different constants. For uniform one-dimensional models with distance $|t|$, the optimal representatives are still midpoints for $r\ge 1$, but the one-cell error becomes $\frac{1}{h}\int_{-h/2}^{h/2}|t|^r\,dt = \frac{h^r}{(r+1)2^{r}}$. Aggregating yields $V_n\asymp L^r/n^r$.

\section{Comparative Discussion: Continuous vs. Discrete Models}

The continuous and discrete formulations are deeply related:

\begin{itemize}
\item In the continuous model, integration along a geodesic circle or arc reduces to a one-dimensional problem in arc length. 
\item In the discrete model, finite sums replace integrals, and the centroid condition becomes the normalized weighted average \eqref{eq:centroid}.
\item When the discrete points become dense (e.g.\ $m\to\infty$ uniformly on an arc), the discrete quantization error converges to the continuous $L^2/(12n^2)$.
\end{itemize}

\begin{remark}
The same geometric reasoning extends to other compact one-dimensional Riemannian manifolds. 
The curvature of $\D S^2$ affects the embedding but not the intrinsic structure of optimal quantizers along geodesic subsets.
\end{remark}

\section{Further Remarks and Extensions}

\begin{itemize}
\item \textbf{Chordal versus geodesic metrics.}  
All results here use the geodesic metric. For small cell diameters, chordal (Euclidean) and geodesic distances agree up to second order, but differences become notable for large arcs.

\item \textbf{Higher-dimensional generalization.}  
On the full sphere $\D S^2$, optimal $n$--point configurations relate to energy-minimizing point sets (spherical codes and designs). Analytic quantization theory in this case connects with geometric measure theory and potential energy minimization.

\item \textbf{Nonuniform densities.}  
For nonuniform distributions on an arc or circle, the Voronoi cells are no longer equal in length; their boundaries shift to equalize weighted distortion. The analysis involves solving $f(x)(x-a_i)$ equilibrium equations along geodesics.

\item \textbf{Algorithmic perspective.}  
Practical computation follows a Lloyd-type algorithm: alternate between assigning each data point to its nearest codepoint (using $d_G$) and updating each codepoint via the normalized mean \eqref{eq:centroid}.
\end{itemize}

\section{Conclusion}

We have developed a self-contained exposition of optimal quantization on spherical surfaces, encompassing both continuous and discrete frameworks.

For uniform distributions supported on one-dimensional spherical subsets (great circles, small circles, and arcs), the optimal $n$--means form uniform partitions, and the squared-error quantization error satisfies
\[
V_n = \frac{L^2}{12n^2},
\]
where $L$ is the intrinsic (geodesic) length of the support.

For discrete measures on $\D S^2$, the spherical centroid condition \eqref{eq:centroid} characterizes optimal means, and explicit examples---from antipodal pairs to tetrahedra---illustrate the geometric structure.

The unifying message is that quantization on curved spaces inherits the simplicity of one-dimensional uniform models once the correct intrinsic metric is adopted. This interplay between geometry and approximation forms the foundation for modern studies of quantization on manifolds and has applications ranging from signal compression to geometric data analysis.

\bigskip

\noindent\textbf{Acknowledgments.}
The author is thankful to the students and colleagues for inspiring discussions on quantization and geometry, particularly those who encouraged the preparation of this expository treatment.


\begin{thebibliography}{9}

\bibitem{Zador1982} P.~L.~Zador, \emph{Asymptotic quantization error of continuous signals and the quantization dimension}, IEEE Trans. Information Theory, 28(2):139--149, 1982.

\bibitem{GershoGray1992} A. Gersho and R. M. Gray, \emph{Vector Quantization and Signal Compression}, Springer, 1992.
\bibitem{GrayNeuhoff1998} R.~M.~Gray and D.~L.~Neuhoff, \emph{Quantization}, IEEE Trans.  Information Theory, 44(6):2325--2383, 1998.

\bibitem{GrafLuschgy2000}  S.~Graf and H.~Luschgy, \emph{Foundations of Quantization for Probability Distributions}, Lecture Notes in Mathematics, vol.~1730, Springer, Berlin, 2000.


\bibitem{Pollard1982} D.~Pollard, \emph{Quantization and the Method of $k$-Means}, IEEE Trans. Inform. Theory, \textbf{28} (1982), no.~2, 199--205.
\bibitem{Linder2002} T. Linder, \emph{Learning-Theoretic Methods in Vector Quantization},  Principles of Nonparametric Learning,  Springer, 2002, pp.~163--210.

\bibitem{MardiaJupp2000} K.~V.~Mardia and P.~E.~Jupp, \emph{Directional Statistics}, Wiley, Chichester, 1999. 

\bibitem{Frechet1948} M. Fréchet, \emph{Les éléments aléatoires de nature quelconque dans un espace distancié}, Annales de l'institut Henri Poincaré, Volume 10 (1948) no. 4, pp. 215-310.

\bibitem{Karcher1977}
H. Karcher, \emph{Riemannian Center of Mass and Mollifier Smoothing}, Comm. Pure Appl. Math., \textbf{30} (1977), no.~5, 509--541.

\bibitem{Tapp} K.~Tapp, \emph{Differential Geometry of Curves and Surfaces}, Springer, 2016.

\bibitem{Pennec2006} X.~Pennec,
Intrinsic statistics on Riemannian manifolds: Basic tools for geometric measurements, \emph{Intrinsic statistics on Riemannian manifolds: Basic tools for geometric measurements}, J. Math. Imaging Vision,  \textbf{25} (2006), 127--154.
 
\bibitem{Roychowdhury2025} M. K. Roychowdhury, \emph{Discrete Quantization on Spherical Geometries: Explicit Models, Computations, and Didactic Exposition}. arXiv: 

\bibitem{RosenblattRoychowdhury2023} J. Rosenblatt and M. K. Roychowdhury, \emph{Uniform distributions on curves and quantization}, Commun. Korean Math. Soc. 38 (2023), No. 2, pp. 431–450.



 
\end{thebibliography}
\end{document}